\documentclass[preprint]{imsart}

\usepackage{amsthm,amsmath}
\usepackage{amsfonts,amstext,amssymb, mathtools, comment}


\startlocaldefs
\usepackage[utf8x]{inputenc}
\usepackage{float}
\usepackage{amsmath}
\usepackage{amsfonts,amstext,amssymb, mathtools, comment}
\usepackage{graphicx,epsfig}
\usepackage{caption}
\usepackage{subcaption}

\textwidth 17 cm \oddsidemargin -0.5 cm \evensidemargin -0.5 cm

\newcommand{\p}{{\mathbb P}}
\newcommand{\e}{{\mathbb E}}
\newcommand{\D}{{\mathrm d}}

\renewcommand{\a}{\alpha}

\newcommand{\1}[1]{\mbox{\rm 1}_{\{#1\}}}

\newcommand{\bs}{\boldsymbol}
\newcommand{\ba}{{\bs \nu}}
\newcommand{\be}{{\bs e}}

\newcommand{\bt}{{\bs t}}
\newcommand{\one}{{\bs 1}}
\newcommand{\matI}{\mathbb{I}}

\renewcommand{\l}{{\bf WRONG}}
\newcommand{\z}{{\zeta}}
\newtheorem{theorem}{Theorem}

\newtheorem{prop}{Proposition}
\newtheorem{remark}{Remark}
\newtheorem{alg}{Algorithm}
\newtheorem{example}{Example}

\endlocaldefs

\begin{document}

\begin{frontmatter}
\title{Exact boundaries in sequential testing for phase-type distributions}
\runtitle{Sequential testing for phase-type distributions}


\begin{aug}
\author{\fnms{Hansj{\"o}rg} \snm{Albrecher}},
\author{\fnms{Peiman} \snm{Asadi}}
\and
\author{\fnms{Jevgenijs} \snm{Ivanovs}}
\address{Department of Actuarial Science, University of Lausanne, CH-1015 Lausanne, Switzerland}

\runauthor{Albrecher, Asadi and Ivanovs}
\end{aug}

\begin{abstract}
 We consider Wald's sequential probability ratio test for deciding whether a sequence of independent and identically distributed observations
comes from a specified phase-type distribution or from an exponentially tilted alternative distribution. In this setting,
we derive exact decision boundaries for given Type I and Type II errors by establishing a link with ruin theory.
Information on the mean sample size of the test can be retrieved as well. The approach relies on the use of matrix-valued 
scale functions associated to a certain one-sided Markov additive process. By suitable transformations the results also apply to 
other types of distributions including some distributions with regularly varying tail.
\end{abstract}

\begin{keyword} \kwd{sequential probability ratio test} \kwd{Markov additive process} \kwd{scale function} \kwd{two-sided exit problems}
\end{keyword}
\end{frontmatter}

\section{Introduction}
Consider Wald's sequential probability ratio test \cite{wald} of a simple hypothesis against a simple alternative.
Let $\z_1,\z_2,\ldots$ be a sequence of independent and identically distributed random variables (observations) with density $f$,
where either $f=f_0$ (hypothesis $H_0$) or $f=f_1$ (hypothesis $H_1$).
The log-likelihood ratio $\Lambda_k$ for the first $k$ observations is then given by
\begin{align*}
 &\Lambda_k=\sum_{i=1}^k\log\frac{f_0(\z_i)}{f_1(\z_i)},\hspace{0.2in}\Lambda_0=0,
\end{align*}
and its first exit time $N$ from the interval $(a,b)$ by
\begin{equation}\label{eq:N}
N=\inf\{k\geq 0:\Lambda_k\notin (a,b)\},
\end{equation}
where $a<0<b$. At time $N$ the sampling is stopped and a decision is made:
accept $H_0$ if $\Lambda_N\geq b$, and accept $H_1$ if $\Lambda_N\leq a$. The corresponding errors are given by
$\a_0=\p_0(\text{reject }H_0)$ and $\a_1=\p_1(\text{reject }H_1),$ where $\p_i$ indicates that hypothesis $H_i$ is valid.

One now wants to choose decision boundaries $a$ and $b$ so that the errors are below prespecified thresholds.
If it is possible to find $a$ and $b$, such that the errors coincide with their respective thresholds, then Wald's test with such boundaries is known to be optimal (i.e.\
the expected number of observations (under both hypotheses) is minimal) among all tests respecting these thresholds, see~\cite{wald_wolfowitz_optimal} and~\cite[Thm.~IV.4]{shirjaev}.
Such a pair $(a,b)$ of boundaries is unique under very weak assumptions~\cite{weiss_unique}, which do hold in our setting below.
Usually, a pair $(a,b)$ resulting in the prespecified errors exists, unless the problem is `too easy',
in which case an optimal test will use zero observations with positive probability, cf.~\cite{wijsman} for an analysis of a more general test.

The following simple bounds on the decision boundaries are known, see~\cite{wald}:
\begin{align}\label{eq:wald}
& a\geq \log \frac{\alpha_1}{1-\alpha_0}, \quad b\leq \log
\frac{1-\alpha_1}{\alpha_0}.
\end{align}
In practice these bounds are often used as actual decision boundaries. As a result, $N$ increases and one of the errors may surpass its threshold, however usually not by a large amount for small errors, see~\cite{wald}.

If the errors $\alpha_0$ and $\alpha_1$ can be determined for any fixed pair $(a,b)$, then the optimal decision boundaries can be found by a numerical search for any given pair of errors $\alpha_0,\alpha_1$ of interest. This inverse problem is however hard even for simple cases. Some tractable examples can be found in Wald~\cite{wald} and~Teugels \& Van Assche 
\cite{teugels_SPRT}, where the latter assume $f_0$ and $f_1$
to be densities of exponential distributions. 
Some strong asymptotic results were obtained in~\cite{lotov},
but they still require identification of the Wiener-Hopf factors corresponding to the random walk $\Lambda_k$,
which can be done explicitly only in some cases.

In the present work we assume that $f_0$ and $f_1$ are densities of phase-type distributions where one
can be obtained by exponential tilting of the other. This includes the case of two exponential densities, as well as two Erlang densities with identical shape parameter. After translating the inverse problem of Wald's test into a two-sided exit problem embedded in classical ruin theory (Section \ref{sec2}), we use techniques for Markov additive processes (Section \ref{sec3}) to establish a surprisingly simple identity, which leads to explicit formulas in Section \ref{sec4}.  The approach simplifies the proof for the exponential case developed in \cite{teugels_SPRT} and extends the results to phase-type densities (taking monotone transformations of the original observations, the results are also applicable for other distributions, such as distributions with regularly varying tails obtained from exponentiating phase-type random variables). In Section \ref{sec:Erlang} we discuss the Erlang case in more detail, for which a very explicit treatment is possible. Section \ref{sec6} provides a general formula for the expected number of observations in Wald's test. Section \ref{sec:var_bayes} studies the uniqueness issue further and considers an extension to a Bayesian version, where an a priori probability for the correctness of $H_0$ is available. Finally Section \ref{sec:numerics} provides some numerical illustrations.

\section{Wald's test and ruin theory}\label{sec2}
Let $f_0$ be a probability density function of some positive random variable $\z$, and let $\p_0$ be the corresponding probability measure.
Consider the Laplace-Stieltjes transform $G_0(\theta)=\e_0 e^{-\theta \z},\theta\geq 0$ of $\z$ and
define a new tilted measure $\p_1$ according to $\frac{\D \p_1}{\D \p_0}=\frac{1}{G_0(\theta)}e^{-\theta \z}$. Then, under $\p_1$, $\z$ has a probability density function $f_1$
given by
\begin{equation}\label{eq:f1}
 f_1(x)=\frac{1}{G_0(\theta)}e^{-\theta x}f_0(x).
\end{equation}

Consider Wald's test for densities $f_0$ and $f_1$, where $\theta>0$, and observe that
\[\log\frac{f_0(x)}{f_1(x)}=\theta x+\log G_0(\theta).\]
Hence the log-likelihood ratio $\Lambda_k$ is a random walk with increments distributed as $\theta \z -d$, where $d=-\log G_0(\theta)>0$ and $\z$ has density $f_0$ (under $H_0$) or $f_1$ (under $H_1$). Define the closely related continuous-time stochastic process
\begin{equation}
\label{eq:risk_proc}X_t=\theta t-\sum_{i=1}^{N_t} d,\quad t\geq 0,
\end{equation}
 where $N_t$ is a renewal process with inter-arrival times distributed as $\z$, see Figure~\ref{fig:risk_process}.
\begin{figure}[h!]
\begin{center}
\includegraphics{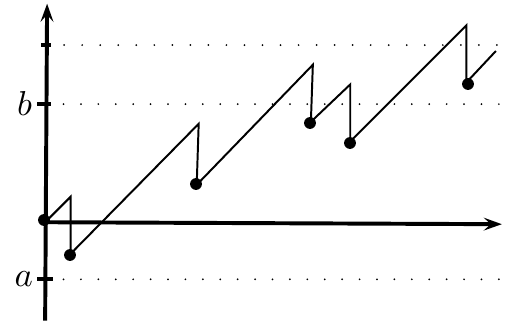}
\caption{Sparre Andersen risk process}
\label{fig:risk_process}
\end{center}
\end{figure}
One can interpret $X_t$ as a surplus process of an insurance portfolio under a Sparre Andersen risk model with initial capital 0, where premiums are collected 
at constant rate $\theta$, and claims of (deterministic) 
size $d$ arrive according to the renewal process $N_t$ (see e.g. \cite{asm_ruin}).
Importantly, one can recover the random walk $\Lambda_k$ from the continuous-time process $X_t$ by considering it at the epochs of jumps. Letting
\begin{align*}
&\tau_a^-=\inf\{t\geq 0:X_t\leq a\},\quad \tau_b^+=\inf\{t\geq 0:X_t\geq b\}
\end{align*}
for $a<0<b$, we observe that
\begin{align}\label{eq:errors}
 &\a_0=\p_0(\tau_a^-<\tau_{b+d}^+),\quad \a_1=\p_1(\tau_{b+d}^+<\tau_a^-),
\end{align}
which is an artifact of the deterministic jumps. Thus we have arrived at a pair of two-sided exit problems for the risk process $X_t$ -- one under $H_0$ and the other one under $H_1$.

\section{Phase-type distributions and Markov additive processes}\label{sec3}
In this section we present a solution of the two-sided exit problem for the process $X_t$ under the assumption that the generic interarrival time $\z$ has a phase-type (PH) distribution,
i.e.\ the distribution of the life time of a transient continuous-time Markov Chain (MC) on finitely many states $1,\ldots,n$, see e.g.~\cite{asm_ruin}.
A PH distribution is parametrized by the transition rate matrix $T$ of the corresponding MC and the row vector $\ba$ representing the initial distribution.
Denoting by $\bt=-T\one$ the column vector of killing (absorption) rates, one can express the density of $\z$ as
\begin{equation}\label{eq:PH}
f(x)=\ba e^{Tx}\bt.
\end{equation}
The Erlang distribution of rate $\lambda$ is retrieved for $\ba=(1,0,\ldots,0)$ and choosing $T$ as a square matrix with $-\lambda$ on the main diagonal, $\lambda$ on the upper diagonal, and 0 elsewhere. Note that the class of PH distributions is dense in the class of all distributions on $(0,\infty)$.

Consider now a bivariate process $(X_t,J_t)$, where $X_t$ is the risk process defined in~\eqref{eq:risk_proc}, and $J_t$ tracks the phase of the current interarrival time, which has PH distribution.
It is not hard to see that $J_t$ is a MC with transition rate matrix $T+\bt\ba$, i.e.\ the transitions can happen due to phase change
or due to arrival of a claim (kill and restart). Furthermore, $(X_t,J_t)$ is a simple example of a Markov Additive Process (MAP) without positive jumps, see~\cite{asm_ruin} for a definition.
Such a process is characterized by a matrix-valued function $F(s),s\geq 0$, which satisfies $\e(e^{s X_t}1_{\{J_t=j\}}|J_0=i)=(e^{F(s)t})_{ij}$ for all $t\geq 0$ and $i,j\in\{1,\ldots,n\}$.
In our case we have the identity
\begin{equation}\label{eq:F}
F(s)=T+\theta s\matI+\bt\ba e^{-ds},
\end{equation}
where $\matI$ is the identity matrix. The diagonal elenments $\theta s$ represent the linear evolution of $X_t$ with slope $\theta$ (the same value in every phase)
and $\bt\ba$ is a matrix of transition rates of $J_t$ causing the jump in $X_t$ with transform $e^{-ds}$,
see~\cite[Prop.~4.2]{asm_ruin}.

The two-sided exit problem for MAPs without positive jumps was solved in~\cite{ivanovs_scale}, 
and the solution resembles the one for a L\'evy process without positive jumps~\cite[Thm.\ 8.1]{kyprianou}.
According to ~\cite{ivanovs_scale}, the matrix of probabilities with $ij$th element $\p(\tau_b^+<\tau_a^-,J_{\tau_b^+}=j|J_0=i)$ is given by $W(-a)W(-a+b)^{-1}$,
where $W(x),x\geq 0$ is a continuous matrix-valued function (called \textit{scale function}) characterized by the transform
\begin{equation}\label{eq:transform}
 \int_0^\infty e^{-sx}W(x)\D x=F(s)^{-1}
\end{equation}
for $s$ large enough. It is known that $W(x)$ is non-singular for $x>0$ and so is $F(s)$ in the domain of interest. Since $J_0$ has distribution $\ba$, we write
\begin{equation}\label{eq:exit}
\p(\tau_b^+<\tau_a^-)=\ba W(-a)W(-a+b)^{-1}\one,
\end{equation}
with $\one=(1,\ldots,1)$. Note that the scale function is given in terms of its transform, and the only known explicit examples assume that all jumps of $X_t$ have PH distributions. In the present setting the jumps are not PH but deterministic,
which nevertheless gives some hope for the inversion problem. Indeed, in the case of an Erlang distribution for $\zeta$ we obtain an explicit representation of $W(x)$, see Section~\ref{sec:Erlang}.

\section{Identification of the errors}\label{sec4}
In the following we assume that $f_0$ is a density of a PH distribution with parameters $T_0,\ba_0$ and $\bt_0=-T_0\one$, see~\eqref{eq:PH}.
Its transform is known to be $G_0(\theta)=\ba_0(\theta\matI-T_0)^{-1}\bt_0$. Consider the density $f_1$, defined in~\eqref{eq:f1},
of the corresponding exponentially tilted distribution with the tilt parameter $\theta>0$.
In~\cite{asm_tilting} it is shown that this tilted distribution is again PH, and the parameters are given by
\begin{equation}\label{eq:PH1}
 T_1=\Delta^{-1} T_0\Delta-\theta \matI,\quad \ba_1=\ba_0\Delta/G_0(\theta),\quad\bt_1=\Delta^{-1}\bt_0,
\end{equation}
where $\Delta$ is a diagonal matrix with $(\theta\matI-T_0)^{-1}\bt_0$ on the diagonal. These  diagonal elements are all in $(0,1)$, which can be seen from the representation of $G(\theta)\in(0,1)$ for different initial distributions $\ba_0$.

Since both $f_0$ and $f_1$ correspond to PH distributions, we can combine~\eqref{eq:errors} and~\eqref{eq:exit} to obtain
\begin{align}\label{eq:errorsW01}
&\a_0=\p_0(\tau_a^-<\tau_{b+d}^+)=1-\ba_0 W_0(-a)W_0(-a+b+d)^{-1}\one,\nonumber \\
&\a_1=\p_1(\tau_{b+d}^+<\tau_a^-)=\ba_1 W_1(-a)W_1(-a+b+d)^{-1}\one,
\end{align}
where $W_0(x)$ and $W_1(x)$ are the (matrix-valued) scale functions corresponding to the MAP $(X_t,J_t)$ for $f=f_0$ and $f=f_1$, respectively.
Interestingly, $W_0$ and $W_1$ are intimately related:

\begin{prop}\label{prop:Wrel}
The scale functions $W_0(x)$ and $W_1(x)$ satisfy
\[W_1(x)=e^{x}\Delta^{-1} W_0(x)\Delta\]
for all $x\geq 0$.
\end{prop}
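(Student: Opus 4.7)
The natural plan is to verify the identity at the level of Laplace transforms, since both scale functions are characterized uniquely (for $s$ large enough) by their transforms via~\eqref{eq:transform}.

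First, I would compute the Laplace transform of the right-hand side. Since $\Delta$ is a constant matrix, it pulls through the integral, yielding
\[\int_0^\infty e^{-sx}\bigl(e^x\Delta^{-1}W_0(x)\Delta\bigr)\D x=\Delta^{-1}\!\left(\int_0^\infty e^{-(s-1)x}W_0(x)\D x\right)\!\Delta=\Delta^{-1}F_0(s-1)^{-1}\Delta,\]
valid for $s$ sufficiently large. Comparing with~\eqref{eq:transform} applied to $W_1$, the claim reduces to the purely algebraic identity
\[F_1(s)=\Delta^{-1}F_0(s-1)\Delta.\]

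The core of the proof is then to verify this matrix identity using the definitions~\eqref{eq:F} and the tilting formulas~\eqref{eq:PH1}. I would expand $F_0(s-1)=T_0+\theta(s-1)\matI+\bt_0\ba_0 e^{-d(s-1)}$ and use the crucial relation $e^d=1/G_0(\theta)$ (coming from $d=-\log G_0(\theta)$) to rewrite $e^{-d(s-1)}=e^{-ds}/G_0(\theta)$. Conjugating by $\Delta$ then splits into three terms: the linear part $\theta s\matI$ is unchanged since $\matI$ commutes with $\Delta$; the term $\Delta^{-1}(T_0-\theta\matI)\Delta=\Delta^{-1}T_0\Delta-\theta\matI$ is exactly $T_1$ by~\eqref{eq:PH1}; and the jump term gives $(\Delta^{-1}\bt_0)(\ba_0\Delta/G_0(\theta))e^{-ds}=\bt_1\ba_1 e^{-ds}$, again by~\eqref{eq:PH1}. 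Summing yields $F_1(s)$ exactly.

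Finally, since both $W_1(x)$ and $e^x\Delta^{-1}W_0(x)\Delta$ are continuous matrix-valued functions on $[0,\infty)$ with the same Laplace transform in a right half-plane, uniqueness of the Laplace transform forces equality everywhere.

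I do not anticipate a serious obstacle: the proof is essentially a bookkeeping exercise once one recognizes that exponential tilting of the observation density at rate $\theta$ corresponds, at the MAP level, to a shift of the argument of $F$ by exactly one unit (because the drift coefficient is $\theta$ and the jump size is $d=-\log G_0(\theta)$). The only step requiring a moment of care is to confirm that the conjugation $\bt_0\ba_0\mapsto\Delta^{-1}\bt_0\ba_0\Delta$ splits cleanly as $\bt_1\ba_1$ with the $1/G_0(\theta)$ factor absorbed into $\ba_1$ rather than $\bt_1$; this is immediate from the asymmetric form of~\eqref{eq:PH1}.
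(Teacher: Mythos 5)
Your proposal is correct and follows essentially the same route as the paper's proof: establish the algebraic identity $\Delta^{-1}F_0(s-1)\Delta=F_1(s)$ via~\eqref{eq:F},~\eqref{eq:PH1} and $d=-\log G_0(\theta)$, compute the Laplace transform of $e^{x}\Delta^{-1}W_0(x)\Delta$ to obtain $F_1(s)^{-1}$, and conclude by uniqueness of the transform for continuous functions. The only difference is that you spell out the term-by-term verification in slightly more detail than the paper does.
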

\begin{proof}
We establish that
\[\Delta^{-1} F_0(s-1)\Delta=\Delta^{-1} (T_0-\theta\matI+\theta s\matI+\bt_0\ba_0/G_0(\theta)e^{-ds})\Delta=
T_1+\theta s\matI+\bt_1\ba_1 e^{-ds}=F_1(s),\]
by using~\eqref{eq:F},~\eqref{eq:PH1} and recalling that $d=-\log G_0(\theta)$.
Now we can check that the proposed matrix-valued function indeed gives the desired transform, see~\eqref{eq:transform}:
\[\int_0^\infty e^{-sx}\left(e^{x}\Delta^{-1} W_0(x)\Delta\right)\D x=\Delta^{-1}\int_0^\infty e^{-(s-1)x}W_0(x)\D x\Delta=\Delta^{-1}F_0(s-1)^{-1}\Delta=F_1(s)^{-1}\]
for $s$ sufficiently large. The result follows, because the transform identifies the continuous $W_1(x),x\geq 0$ uniquely.
\end{proof}
\begin{remark}
This curious relation -- revealed by an application in sequential testing -- would be hard to 
obtain by simple tailoring of parameters - the corresponding quantities simplify in an intriguing  way. It also paves the way for further interesting relations between the two processes, which, however, are outside the scope of the present paper.
\end{remark}

Combining~\eqref{eq:errorsW01} and Proposition~\ref{prop:Wrel}, we obtain the following result.
\begin{theorem}\label{thm:main}
Let $f_0$ be a density of a PH distribution with parameters $T_0,\ba_0,\bt_0$, and $f_1$ be the corresponding exponentially tilted density with the tilt parameter $\theta>0$.
 The errors $\a_0$ and $\a_1$ corresponding to the decision boundaries $a<0<b$ in the Wald test of $f_0$ against $f_1$ are given by
\begin{align*}
&\a_0=1-\ba_0 W_0(-a)W_0(-a+b+d)^{-1}\one,\nonumber \\
&\a_1=e^{-b}\ba_0 W_0(-a)W_0(-a+b+d)^{-1}(\theta\matI-T_0)^{-1}\bt_0,
\end{align*}
where $d=-\log G_0(\theta)>0$, $G_0(\theta)$ is the Laplace transform of $f_0$, and the continuous matrix-valued function $W_0(x),x\geq 0,$ is identified by
\[\int_0^\infty e^{-sx}W_0(x)\D x=\left(T_0+\theta s\matI+\bt_0\ba_0 e^{-ds}\right)^{-1}\] for large $s$.
\end{theorem}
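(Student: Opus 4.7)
The formula for $\alpha_0$ is already given verbatim by the first line of~\eqref{eq:errorsW01}, so nothing needs to be done there. The entire content of the theorem therefore reduces to rewriting the second line of~\eqref{eq:errorsW01} in terms of the quantities associated with $f_0$ alone. The plan is to plug Proposition~\ref{prop:Wrel} and the parameter relations~\eqref{eq:PH1} into
\[\a_1=\ba_1 W_1(-a)W_1(-a+b+d)^{-1}\one,\]
and to verify that all objects indexed by $1$ collapse to objects indexed by $0$, up to scalar factors $e^{\pm a},e^{\pm b},e^{\pm d}$ and conjugation by $\Delta$.

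Concretely, I would first note the two ingredients. From Proposition~\ref{prop:Wrel}, inverting yields $W_1(x)^{-1}=e^{-x}\Delta^{-1}W_0(x)^{-1}\Delta$, so
\[W_1(-a)W_1(-a+b+d)^{-1}=e^{-a}e^{a-b-d}\,\Delta^{-1}W_0(-a)W_0(-a+b+d)^{-1}\Delta=e^{-b-d}\Delta^{-1}W_0(-a)W_0(-a+b+d)^{-1}\Delta.\]
From~\eqref{eq:PH1} and the identity $d=-\log G_0(\theta)$, we have $\ba_1=\ba_0\Delta/G_0(\theta)=e^d\ba_0\Delta$. Multiplying these two factors together the powers of $e^d$ cancel, leaving
\[\a_1=e^{-b}\,\ba_0\,W_0(-a)W_0(-a+b+d)^{-1}\Delta\one.\]
The final step is to observe that $\Delta$ is by definition the diagonal matrix carrying the entries of $(\theta\matI-T_0)^{-1}\bt_0$, hence $\Delta\one=(\theta\matI-T_0)^{-1}\bt_0$, which gives exactly the stated expression for $\alpha_1$.

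The characterisation of $W_0$ at the bottom of the statement is immediate: it is just~\eqref{eq:transform} applied to the specific $F$ from~\eqref{eq:F} with $T=T_0$, $\ba=\ba_0$, $\bt=\bt_0$, while the formula $G_0(\theta)=\ba_0(\theta\matI-T_0)^{-1}\bt_0$ justifies that $d>0$ and that $\Delta$ is well defined (its diagonal entries lying in $(0,1)$ as recalled just after~\eqref{eq:PH1}).

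There is no real obstacle here; the hard work has already been done in establishing the surprising identity of Proposition~\ref{prop:Wrel}, which compresses everything relating the two MAPs into a single conjugation-plus-exponential factor. The only place where one has to be a little careful is tracking the scalar exponentials through the inversion $W_1\mapsto W_1^{-1}$ and verifying that the factor $G_0(\theta)$ appearing in $\ba_1$ cancels exactly the factor $e^{-d}$ introduced through the second scale function's argument shift; the remaining $e^{-b}$ survives and is the factor appearing in the statement.
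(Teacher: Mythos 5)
Your proposal is correct and follows exactly the paper's route: the paper states that the theorem is obtained by "combining \eqref{eq:errorsW01} and Proposition \ref{prop:Wrel}" and leaves the computation implicit, and your write-up carries out precisely that combination, with the exponential factors ($e^{d}$ from $1/G_0(\theta)$ cancelling $e^{-d}$ from the argument shift, leaving $e^{-b}$) and the identification $\Delta\one=(\theta\matI-T_0)^{-1}\bt_0$ all handled correctly.
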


The transform of $W_0(x)$ can be inverted in certain cases. In Section~\ref{sec:Erlang} we provide
an explicit expression of $W_0(x)$ when $f_0$ (and then also $f_1$) is the density of an Erlang distribution. In other cases one can use numerical methods.

In addition, Theorem~\ref{thm:main} provides simple bounds for the level $b$.
First, observe that $\ba_0 W_0(-a)W_0(-a+b+d)^{-1}$ is a vector of probabilities, and recall that all the entries of $(\theta\matI-T_0)^{-1}\bt_0$ are in~$(0,1)$.
Then we can write
\[m(1-\a_0) \leq \a_1e^b\leq (1-\a_0)M,\]
where $m$ and $M$ are the minimal and the maximal entries of $(\theta\matI-T_0)^{-1}\bt_0$. Hence also
\begin{equation}\label{eq:bound}
 \log\frac{1-\a_0}{\a_1}+\log m\leq b\leq \log\frac{1-\a_0}{\a_1}+\log M,
\end{equation}
where both $\log m$ and $\log M$ are negative. This provides an improvement (for the PH case) of the widely used general Wald bound $b\leq \log\frac{1-\a_0}{\a_1}$.

\begin{example}
If $f_0$ is the density of an exponential distribution with rate $\lambda_0$, then $f_1$ is a density of an exponential distribution with rate $\lambda_1=\lambda_0+\theta$. Here the matrix $T_0$ reduces to a scalar $-\lambda_0$,
and hence $m=M=\lambda_0/(\lambda_0+\theta)=\lambda_0/\lambda_1$ leading to $b=\log\frac{1-\a_0}{\a_1}-\log\frac{\lambda_1}{\lambda_0}$. This simple identity
for exponential densities was already established in~\cite{teugels_SPRT}. Computation of the boundary $a<0$ is more involved, and relies on the identity
\[W_0(-a)/W_0(-a+\log\frac{1-\a_0}{\a_1})=1-\a_0,\]
where $W_0(x)$ will be identified in Section \ref{sec:Erlang}.
\end{example}
In general, we do not have a closed form solution for $b$, and hence the two equations in Theorem~\ref{thm:main} need to be solved simultaneously.

\section{Erlang against Erlang}\label{sec:Erlang}
Throughout this section we consider the case when $f_0$ is the density of an Erlang distribution with $n$ phases and rate $\lambda_0$,
i.e. $f_0(x)=\lambda_0^nx^{n-1}e^{-\lambda_0 x}/(n-1)!$, which has Laplace transform $G_0(\theta)=\left(\frac{\lambda_0}{\lambda_0+\theta}\right)^n$.
Exponential tilting of $f_0$ with the tilt parameter $\theta>0$ results in $f_1$, which is another Erlang density on $n$ phases, but with rate $\lambda_1=\lambda_0+\theta$.
Hence our setup allows to consider two arbitrary Erlang distributions with the same number of phases.

Under the Erlang assumption, the jump size $d=-n\log (\lambda_0/\lambda_1)$ only depends on the ratio $\rho=\lambda_0/\lambda_1$ of the two rates, not on their absolute values. Also, since 
$\theta\,\cdot$ Erlang($n,\lambda_i$)$\sim$Erlang($n,\lambda_i /\theta$), a scaling of $\theta$ down to 1 simply stretches the process $X_t$ of \eqref{eq:risk_proc} in the horizontal direction by the factor $\theta$ (under both hypotheses) and the law of the random walk $\Lambda_k$ is unchanged. Hence Wald's test only depends on the ratio $\rho$ and w.l.o.g. we can choose $\theta=1$, i.e. $\lambda_1=\lambda_0+1$, leading to 
$\lambda_0=\rho/(1-\rho)$ and $\lambda_1=1/(1-\rho)$ for the ratio $\rho=\lambda_0/\lambda_1\in (0,1)$.

%
%
Consider the PH parameters $T_0,\ba_0$ and $\bt_0$ of the density $f_0$, where $T_0$ is an $n\times n$ matrix with $-\lambda_0$ on the diagonal, $\lambda_0$ on the upper diagonal and 0 elsewhere;
$\ba_0=(1,0,\ldots,0)=\be_1$ and $\bt_0=(0,\ldots,0,\lambda_0)'$. Some algebraic manipulations show that the vector
$(\theta\matI-T_0)^{-1}\bt_0$ simplifies to $(\rho^{n},\rho^{n-1},\ldots,\rho^{1})'$, and so by Theorem~\ref{thm:main} we have
\begin{align}\label{eq:errors_Erlang}
&\a_0=1-\be_1 W_0(-a)W_0(-a+b+d)^{-1}\one, \\
&\a_1=e^{-b}\be_1 W_0(-a)W_0(-a+b+d)^{-1}(\rho^{n},\rho^{n-1},\ldots,\rho^{1})',\nonumber
\end{align}
where $d=-n\log\rho$, the transform of $W_0(x)$ is given by $F_0(s)^{-1}$, and according to~\eqref{eq:F}
\begin{equation}\label{eq:FErlang}
F_0(s)=\begin{pmatrix}
                   s-\lambda_0&\lambda_0&0&\ldots&0\\
            0&s-\lambda_0&\lambda_0&\ldots&0\\
&&\ldots&&\\
            \lambda_0 e^{-sd}&0&0&\ldots&s-\lambda_0\\
                  \end{pmatrix}
\end{equation}
for $n\geq 2$, whereas $F_0(s)=s-\lambda_0+\lambda_0 e^{-sd}$ for $n=1$.
The bounds~\eqref{eq:bound} for $b$ now simplify to
\begin{equation}\label{imprbou}\log\frac{1-\a_0}{\a_1}-n\log\rho^{-1}\leq b\leq \log\frac{1-\a_0}{\a_1}-\log \rho^{-1},\end{equation}
where $\log\rho^{-1}>0$.
It turns out that $W_0(x)$ has a relatively simple expression as a sum of $\lfloor x/d\rfloor$ terms.
\begin{theorem}\label{thm:W}
 Consider a MAP with $n$ phases characterized by $F_0(s)$ given in~\eqref{eq:FErlang} for an arbitrary $d>0$.
Then the \text{ij}th element of the scale function $W_0(x)$ for $x\geq 0$
is given by
\begin{equation}\label{qua}
 W_0(x)_{ij}=\sum_{k=\1{i>j}}^{\lfloor x/d\rfloor} g(\lambda_0(x-dk),kn+j-i),\qquad i,j=1,\ldots,n 
\end{equation}
where $g(y,m)=\frac{(-y)^m}{m!}e^{y}$. 
\end{theorem}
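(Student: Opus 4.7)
The plan is to verify the claimed formula by computing its Laplace transform and checking that it equals $F_0(s)^{-1}$, since by the characterization in~\eqref{eq:transform} the continuous scale function is uniquely determined by this transform.

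First, I would compute the Laplace transform of a single summand. Substituting $u = x - dk$ and using the elementary identity $\int_0^\infty u^m e^{-(s-\lambda_0)u}\D u = m!/(s-\lambda_0)^{m+1}$ (valid for $s > \lambda_0$), one obtains
\begin{equation*}
\int_{dk}^\infty e^{-sx} g(\lambda_0(x-dk),m)\D x = e^{-sdk}\,\frac{(-\lambda_0)^m}{(s-\lambda_0)^{m+1}}.
\end{equation*}
Interchanging sum and integral (justified because for each fixed $k$ only the region $x\geq dk$ contributes), with $m=kn+j-i$, and setting $u := -\lambda_0/(s-\lambda_0)$ and $r := e^{-sd}$, summing the resulting geometric series in $k$ gives the compact form
\begin{equation*}
\tilde W_0(s)_{ij} = \frac{1}{(s-\lambda_0)(1-u^n r)}\cdot\begin{cases} u^{j-i} & \text{if } i\leq j,\\ u^{n+j-i}\,r & \text{if } i>j,\end{cases}
\end{equation*}
valid for $s$ large enough that $|u^n r|<1$.

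Next I would verify directly that $F_0(s)\tilde W_0(s) = \matI$. The key algebraic identity throughout is $(s-\lambda_0)u+\lambda_0 = 0$ by the definition of $u$, together with $\lambda_0 u^{n-1} = -(s-\lambda_0)u^n$. Since each row of $F_0(s)$ has at most two nonzero entries, row $i<n$ yields $(s-\lambda_0)\tilde W_0(s)_{ij}+\lambda_0\tilde W_0(s)_{i+1,j}$; after splitting into the subcases $j<i$, $j=i$, $j=i+1$, $j>i+1$ (and being careful about which case of the piecewise formula applies to each factor), the off-diagonal entries collapse to $0$ via the identity above, and the diagonal entry $j=i$ reduces, using $\lambda_0 u^{n-1}r = -(s-\lambda_0)u^n r$, to $(s-\lambda_0)(1-u^n r)/[(s-\lambda_0)(1-u^n r)] = 1$. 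The last row ($i=n$), whose nonzero entries are $\lambda_0 r$ in column $1$ and $s-\lambda_0$ in column $n$, is handled analogously, splitting by $j=1$, $1<j<n$, $j=n$.

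Finally, I would verify that the proposed $W_0$ is continuous on $[0,\infty)$: at each threshold $x=dk$ a new summand enters, contributing $g(0,kn+j-i)$, which vanishes whenever $kn+j-i\geq 1$; this holds in every case except the trivial $k=0$, $i=j$ one. This also yields $W_0(0)=\matI$, as expected. Since $\tilde W_0(s)=F_0(s)^{-1}$ for large $s$ and $W_0$ is continuous, uniqueness of the Laplace transform identifies it with the scale function. The whole argument is essentially bookkeeping; the only delicate point will be juggling the piecewise definition of $\tilde W_0(s)_{ij}$ across the four subcases for rows $i<n$ and the three subcases for row $n$, which I expect to be the main source of potential error rather than any genuine conceptual obstacle.
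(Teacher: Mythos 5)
Your proposal is correct, and it reaches the same key intermediate object as the paper but from the opposite direction. The paper \emph{derives} the closed form
\[
\left(F_0(s)^{-1}\right)_{ij}=\frac{1}{(s-\lambda_0)^n-(-\lambda_0)^ne^{-sd}}\times\begin{cases}(-\lambda_0)^{l}(s-\lambda_0)^{n-l-1}, &i\leq j,\\ (-\lambda_0)^{l}(s-\lambda_0)^{n-l-1} e^{-sd}, &i> j,\end{cases}\qquad l=(j-i)\bmod n,
\]
by Cramer's rule and a cofactor computation, expands the scalar prefactor as a geometric series in $(-\lambda_0)^n(s-\lambda_0)^{-n}e^{-sd}$, and inverts term by term using $\int_0^\infty e^{-sx}\frac{x^m}{m!}e^{\lambda_0 x}\D x=(s-\lambda_0)^{-(m+1)}$ together with the shift property of $e^{-sdk}$. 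You instead start from the claimed $W_0(x)$, transform it termwise (the same elementary integral and shift), resum the geometric series to obtain exactly the displayed expression (your $u^{j-i}/[(s-\lambda_0)(1-u^nr)]$ form is identical to it after clearing $(s-\lambda_0)^n$), and then certify that it equals $F_0(s)^{-1}$ by checking $F_0(s)\tilde W_0(s)=\matI$ row by row. Both arguments invoke the same uniqueness principle for continuous functions with a given transform, and your continuity check at the thresholds $x=dk$ (where the entering summand $g(0,kn+j-i)$ vanishes for $kn+j-i\geq 1$) is a point the paper leaves implicit. What your route buys is the avoidance of the determinant/cofactor computation --- arguably the most error-prone step of the paper's proof --- replaced by a routine two-term-per-row multiplication built on the single identity $(s-\lambda_0)u+\lambda_0=0$; the trade-off is that a verification argument presupposes the answer, whereas the paper's derivation produces it. Your case analysis for the rows $i<n$ and $i=n$ does close correctly (I checked all subcases), so the "bookkeeping" risk you flag does not materialize.
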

\begin{proof}
In the proof we drop the subscript $0$. We need to invert the transform $\int_0^\infty e^{-s x}W(x)\D x=F(s)^{-1}$.
Application of Cramer's rule and careful computation of co-factors yields
\[\left(F(s)^{-1}\right)_{ij}=\frac{1}{(s-\lambda)^n-(-\lambda)^ne^{-sd}}\times\begin{cases}
                                                                (-\lambda)^{l}(s-\lambda)^{n-l-1}, &i\leq j\\
(-\lambda)^{l}(s-\lambda)^{n-l-1} e^{-sd}, &i> j\\
                                                               \end{cases},
\]
where $l=(j-i)\mod n$.
Note that the fraction in front can be written as $(s-\lambda)^{-n}\sum_{k=0}^\infty (-\lambda)^{kn}(s-\lambda)^{-kn}e^{-sdk}$ for sufficiently large $s$. Hence 
\begin{align*}
\left(F(s)^{-1}\right)_{ij}=\sum_{k=0}^\infty \frac{(-\lambda)^{kn+l}}{(s-\lambda)^{kn+l+1}}e^{-sd(k+\1{i>j})}.
\end{align*}
Using $\int_0^\infty e^{-sx}\frac{x^n}{n!}e^{\lambda x}\D x=\frac{1}{(s-\lambda)^{n+1}}$ we invert $\frac{(-\lambda)^{kn+l}}{(s-\lambda)^{kn+l+1}}$ to obtain $\frac{(-\lambda x)^{kn+l}}{(kn+l)!}e^{\lambda x}=g(\lambda x,kn+l)$.
The factor $e^{-sd(k+\1{i>j})}$ amounts to shifting $x$ to $x-d(k+\1{i>j})$.
Hence for $j\geq i$
\[W(x)_{ij}=\sum_{k=0}^\infty g(\lambda(x-dk),kn+j-i)\1{x\geq dk}=\sum_{k=0}^{\lfloor x/d\rfloor} g(\lambda(x-dk),kn+j-i).\]
Similarly, for $j<i$ we have
\[W(x)_{ij}=\sum_{k=0}^\infty g(\lambda(x-d(k+1)),kn+n+j-i)\1{x\geq d(k+1)}=\sum_{k=1}^{\lfloor x/d\rfloor} g(\lambda(x-dk),kn+j-i),\]
which concludes the proof.
\end{proof}
The quantity \eqref{qua} is intimately connected with the waiting time distribution in an $E(n)/D/1$ queue, see for instance \cite{franx}. In a risk theory context, for the case $n=1$ (which refers to the compound Poisson model with deterministic jumps), formula \eqref{qua} can already be found in \cite[Sec.3.3.2.1]{segerdahl}, see also \cite{gerber}.

\section{On the number of observations}\label{sec6}
In this section we determine $\e z^N$ under both hypotheses, where $N$ is the number of observations leading to a decision, see~\eqref{eq:N}. To that end, some further exit theory of MAPs~\cite{ivanovs_scale} can be used (and the present context provides an interesting 
illustration of the applicability of the latter). We will also utilize the concept of killing, see e.g.~\cite{ivanovs_killing}.

Suppose we kill our MAP $(X_t,J_t)$ right before every jump $-d$ with probability $1-z$, where $z\in(0,1]$ (i.e., the process is sent to an additional absorbing state).
Write $\p^z$ for the corresponding probability measure.
Then \[\p^z(\tau_a^-<\tau_{b+d}^+)=\e(z^N \1{\Lambda_N\leq a}),\] because the process has to survive $N$ independent killing instants.
Similarly,
\[z\p^z(\tau_{b+d}^+<\tau_a^-)=\e(z^N\1{\Lambda_N\geq b}),\]
where prefactor $z$ comes from the fact that the MAP should not be killed at the jump following its first passage time over $b+d$.
Adding these two equations we obtain $\e z^N$.

Importantly, all exit identities still hold for the killed MAP, which is characterized by $F^z(s)=T+\theta s\matI+\bt\ba ze^{-ds}$.
In particular, $\p^z(\tau_{b+d}^+<\tau_a^-)=\ba W^z(-a)W^z(-a+b+d)^{-1}\one$, where the transform of the scale function $W^z(x)$ evaluates to $F^z(s)^{-1}$.
Furthermore, from Corollary 3 in~\cite{ivanovs_scale} we have
\[\p^z(\tau_a^-<\tau_{b+d}^+)=\ba\left(Z^z(-a)-W^z(-a)W^z(-a+b+d)^{-1}Z^z(-a+b+d)\right)\one,\]
where $Z^z(x)=\matI-\int_0^xW^z(y)\D yF^z(0)$. Therefore,
\[\e z^N=\ba\left(Z^z(-a)-W^z(-a)W^z(-a+b+d)^{-1}Z^z(-a+b+d)+zW^z(-a)W^z(-a+b+d)^{-1}\right)\one.\]
Noting that $F^z(0)\one=(z-1)\bt$, differentiating with respect to $z$ and letting $z\uparrow 1$ we get
\begin{equation}\label{eq:EN}
 \e N=-\int_0^{-a}\ba W(y)\bt\D y+\ba W(-a)W(-a+b+d)^{-1}\left(\int_0^{-a+b+d} W(y)\bt\D y+\one\right),
\end{equation}
where $W(x)$ corresponds to the case of no killing ($z=1$).
Here we also used differentiability of $W^z(x)$ and $Z^z(x)$ in $z$, which can be shown using further fluctuation identities.
Formula \eqref{eq:EN} provides both $\e_0 N$ and $\e_1 N$, where the latter can be expressed through the quantities associated to hypothesis $H_0$ using Proposition~\ref{prop:Wrel} and~\eqref{eq:PH1}.

\section{Variational and Bayesian formulation}\label{sec:var_bayes}
\subsection{Variational formulation: the optimality region}
So far we have focused on the variational formulation of Wald's test. According to this formulation, for given errors $\a_0$ and $\a_1$ one needs to
determine the decision boundaries $a<0<b$ resulting in these errors. For that purpose one can solve the two equations of Theorem~\ref{thm:main} using numerical methods.
When such boundaries exist, they are unique and they define the optimal test minimizing both $\e_0 N$ and $\e_1 N$.
The following algorithm can be used to determine the region $R$ of $(\a_0,\a_1)$ in $[0,1]\times [0,1]$, for which the decision boundaries (resulting in the errors) exist,
and hence Wald's test is optimal. This algorithm can be analyzed using monotonicity results from~\cite{wijsman_monotonicity}. We omit its thorough discussion.
\begin{alg}
Determination of the optimality region $R$:
\begin{enumerate}
 \item Find the errors $\a_0^*$ and $\a_1^*$ corresponding to $a=b=0$.
\item Fix $b=0$; for all $\a_0$ in $[0,\a_0^*)$ determine $a$ which results in $\a_0$ and then find the corresponding $\a_1>\a_1^*$.
\item Fix $a=0$; for all $\a_1$ in $[0,\a_1^*)$ determine $b$ which results in $\a_1$ and then find the corresponding $\a_0>\a_0^*$.
\end{enumerate}
These two continuous curves $(\a_0,\a_1)$, the point
$(\a_0^*,\a_1^*)$, and the axis provide the boundary of the
optimality region $R$. \label{alg1}
\end{alg}
We provide an example for the optimality region $R$ in Section~\ref{sec:numerics}.
It indicates that $R$ is large enough to include most cases of practical interest. If the pair of errors lies outside of $R$, then
the problem of testing is `too easy', i.e.\ a certain test, which uses zero observations with positive probability, will perform better than any Wald's test with $a<0<b$.

\subsection{Bayesian formulation}
In the Bayesian formulation, it is assumed that $H_0$ has some prior probability $\pi\in[0,1]$, see e.g.~\cite{shirjaev}.
For fixed constants $c,c_0,c_1>0$ one defines a penalty (or average loss)
\begin{equation}\label{eq:penalty}
 \gamma=\pi(c \e_0 N+c_0\a_0)+(1-\pi)(c \e_1 N+c_1\a_1),
\end{equation}
which is to be minimized.
It turns out that there always exists a test which is optimal for all $\pi$, i.e. it minimizes the penalty among all tests.
The rule is to stop when the posterior probability of $H_0$ exits some interval $(a^*,b^*)$, where $0\leq a^*\leq b^*\leq 1$, with the obvious decision.
Expressing the posterior probability through $\Lambda_k$, one observes that an equivalent rule is to stop when $\Lambda_k$ exits
\begin{equation}\label{eq:bayes}
 (a,b)=(\log\frac{a^*}{1-a^*} + \log\frac{1-\pi}{\pi},\log\frac{b^*}{1-b^*} + \log\frac{1-\pi}{\pi}),
\end{equation}
see~\cite{shirjaev}. 
Recall that for a given pair $(a,b)$ we can find $\a_0,\a_1$ and $\e_0 N,\e_1 N$ using Theorem~\ref{thm:main} and \eqref{eq:EN} respectively,
and so we can calculate the penalty~$\gamma$ for a fixed prior $\pi$. Hence to find an optimal $(a,b)$,
corresponding to the minimal penalty, we only need to run a numerical optimization routine. If this $(a,b)$ is the unique pair minimizing the penalty, then $(a^*,b^*)$ can be recovered from the above relation. 
\section{Numerical illustrations}\label{sec:numerics}
In this section we provide an illustration of the applicability of our results for both the variational and Bayesian formulation.
For simplicity we choose an Erlang distribution with 2 phases and rate $\lambda$, 
and consider Wald's test of the simple hypothesis $\lambda=\lambda_0$ against the simple alternative $\lambda=\lambda_1$, where $\lambda_0<\lambda_1$. In Section~\ref{sec:Erlang} it was shown that in such a situation Wald's test depends only on the single parameter $\rho=\lambda_0/\lambda_1\in(0,1)$ and  the scale function $W_0(x)$ has an explicit representation.\\
\indent Let us first consider the variational formulation. We choose errors $\a_0=0.05$ and $\a_1=0.025$, and find 
the decision boundaries $a<0<b$ by solving~\eqref{eq:errors_Erlang} numerically. Figure~\ref{fig:Bounds} depicts $a$ and $b$ as functions of $\rho$ (solid lines), 
as well as their Wald bounds~\eqref{eq:wald} (dashed lines) and the improved upper and lower bounds for $b$ from \eqref{imprbou} (dotted lines). 
Figure~\ref{fig:EN} depicts $\max(\e_0 N,\e_1 N)$ for the exact boundaries (solid line)
and their Wald bounds (dashed line), respectively.
\begin{figure}[H]
        \centering
        \begin{subfigure}[b]{0.4\textwidth}
                \centering
                \includegraphics[trim = 40mm 85mm 40mm 80mm, clip, width=7cm]{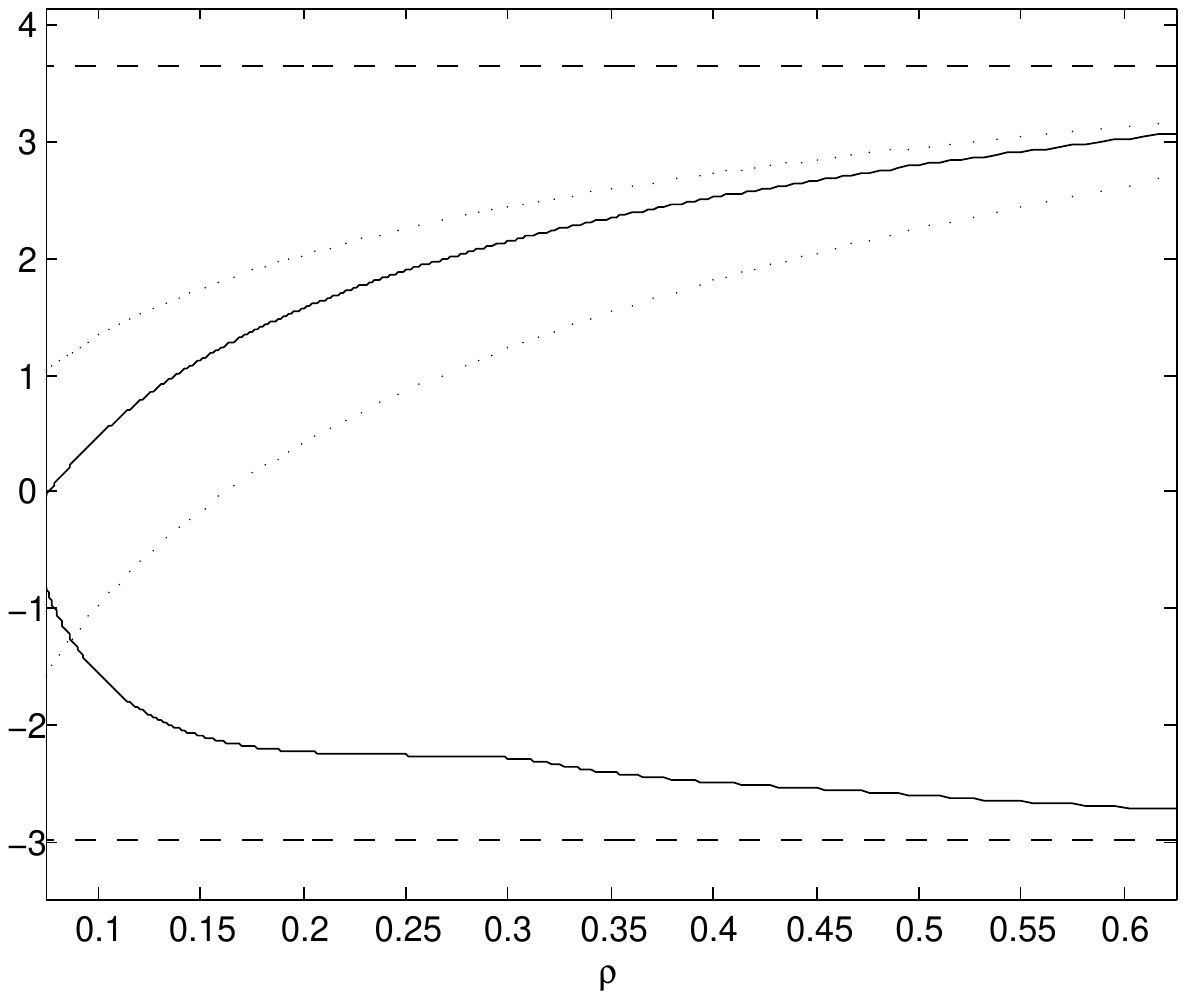}
                \caption{Decision boundaries and their bounds}
                \label{fig:Bounds}
        \end{subfigure}%
        ~ 
        \begin{subfigure}[b]{0.4\textwidth}
                \centering
                \includegraphics[trim = 40mm 85mm 40mm 80mm, clip, width=7cm]{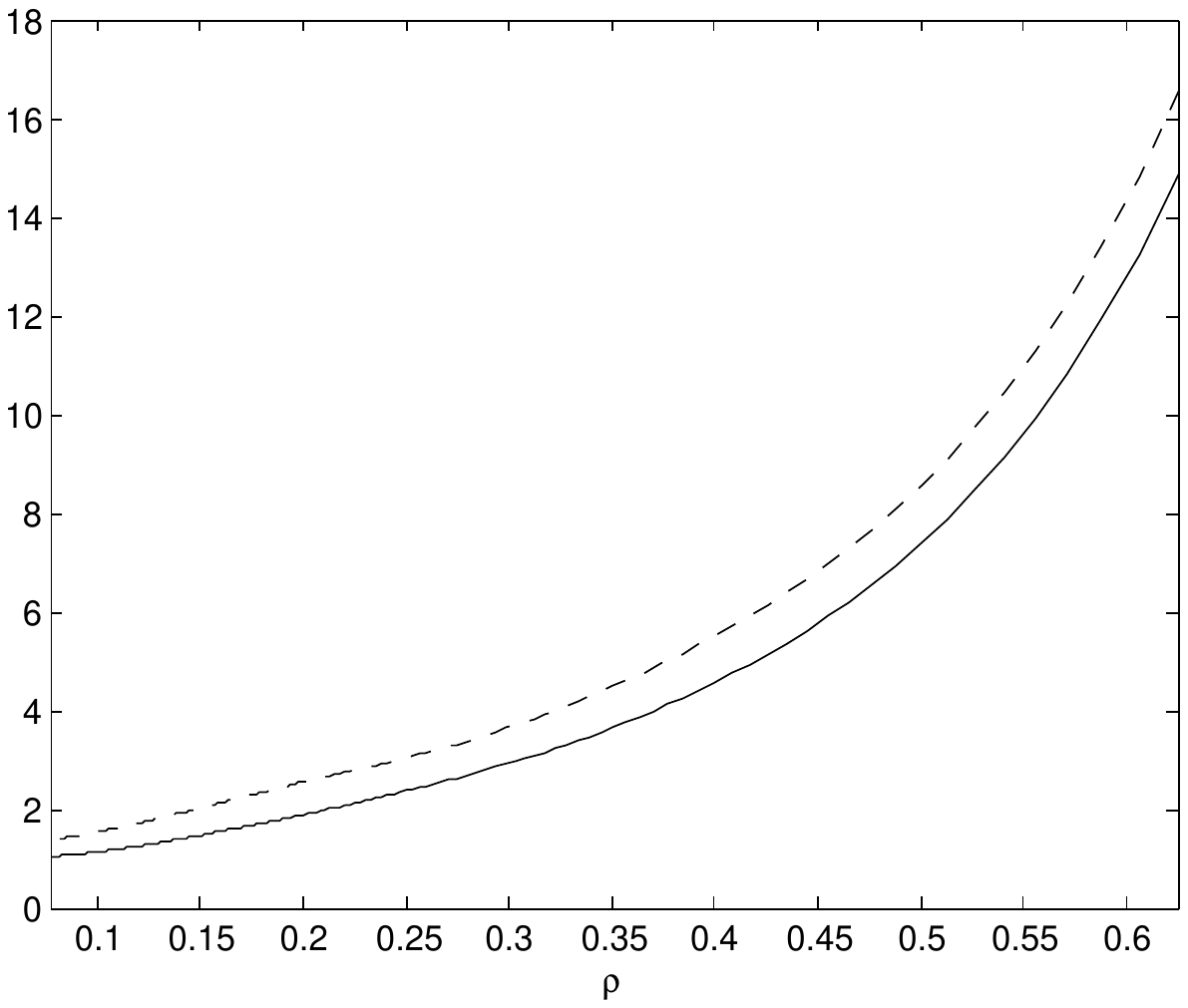}
                \caption{$\max(\e_0 N,\e_1 N)$}
                \label{fig:EN}
        \end{subfigure}
        ~ 

        \caption{Decision boundaries and the maximal expected number of observations}\label{fig:BoundsEN}
\end{figure}
Let us briefly comment on the case when $\rho$ is close to $1$, i.e. the test problem is very hard.
In this case the increments of the random walk $\Lambda_k$ decrease in absolute value. 
This implies that $\Lambda_N$ is very close to $a$ or $b$ (depending on the side of exit), which makes the Wald bounds very tight (see also a discussion in~\cite{wald}). In Figures~\ref{fig:Bounds} and ~\ref{fig:EN} one can see that the boundaries get indeed closer to their Wald bounds and the expected number of observations increases as $\rho\rightarrow 1$. When $\rho$ gets close to $1$, also numerical problems arise, as due to small $d$ the number of terms in the representation of $W_0(x)$ becomes large (cf. Theorem~\ref{thm:W}).\\
\indent On the other hand, when $\rho$ decreases to 0, the test problem becomes simpler. When one of the boundaries hits 0, the Wald test stops being optimal (cf. Algorithm~\ref{alg1}).
Figure~\ref{fig:OptimalRegion} depicts optimality regions of the Wald test for different values of  $\rho$ for the above Erlang(2) example.\\
\indent Let us turn our attention now to the Bayesian formulation, see Section~\ref{sec:var_bayes}.
We choose $c=0.1,c_0=1$ and $c_1=2$ for the penalty $\gamma$ in ~\eqref{eq:penalty} and two different values $\pi=0.3$ and $\pi=0.7$ for the prior.
Figure~\ref{fig:bayes_ab} depicts the optimal boundaries $a$ and $b$ (minimizing the penalty). 
These boundaries are used to compute the optimal boundaries $a^*$ and $b^*$ for the posterior probability by virtue of ~\eqref{eq:bayes}, 
which can only be done if $(a,b)$ is a unique pair achieving the minimal penalty. The result is depicted in Figure~\ref{fig:bayes_abstar}.
Recall that the latter boundaries do not depend on the prior $\pi$, and hence the lines for both $\pi$ should coincide.
This is indeed the case up to $\rho\approx 0.38$, at which point $a$ (corresponding to $\pi=0.3$) hits level 0 and uniqueness is lost (in this case, $b$ can be any positive number).
The correct values of $a^*$ and $b^*$ follow the solid lines corresponding to $\pi=0.7$.\\
\indent Note that the behavior of the boundaries $a$ and $b$ as functions of $\rho$ is substantially  different for the variational and the Bayesian formulation.
For increasing $\rho$, the distance between the decision boundaries increases in the former case and decreases in the latter, where controlling the number of observations becomes the dominant issue. 

\begin{figure}
        \centering
        \includegraphics[trim = 40mm 85mm 40mm 80mm, clip, width=7cm]{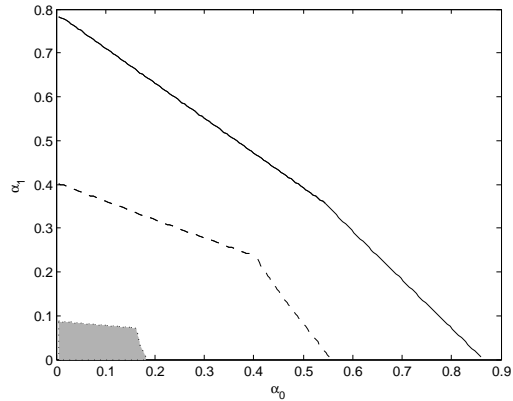}
        \caption{Optimality region $R$ for $\rho=1/6$ (shaded), $\rho=1/2$ (below dashed line) and $\rho=5/6$ (below solid line)}
        \label{fig:OptimalRegion}
\end{figure}
\begin{figure}[H]
        \centering
        \begin{subfigure}[b]{0.4\textwidth}
                \centering
                \includegraphics[trim = 40mm 85mm 40mm 80mm, clip, width=7cm]{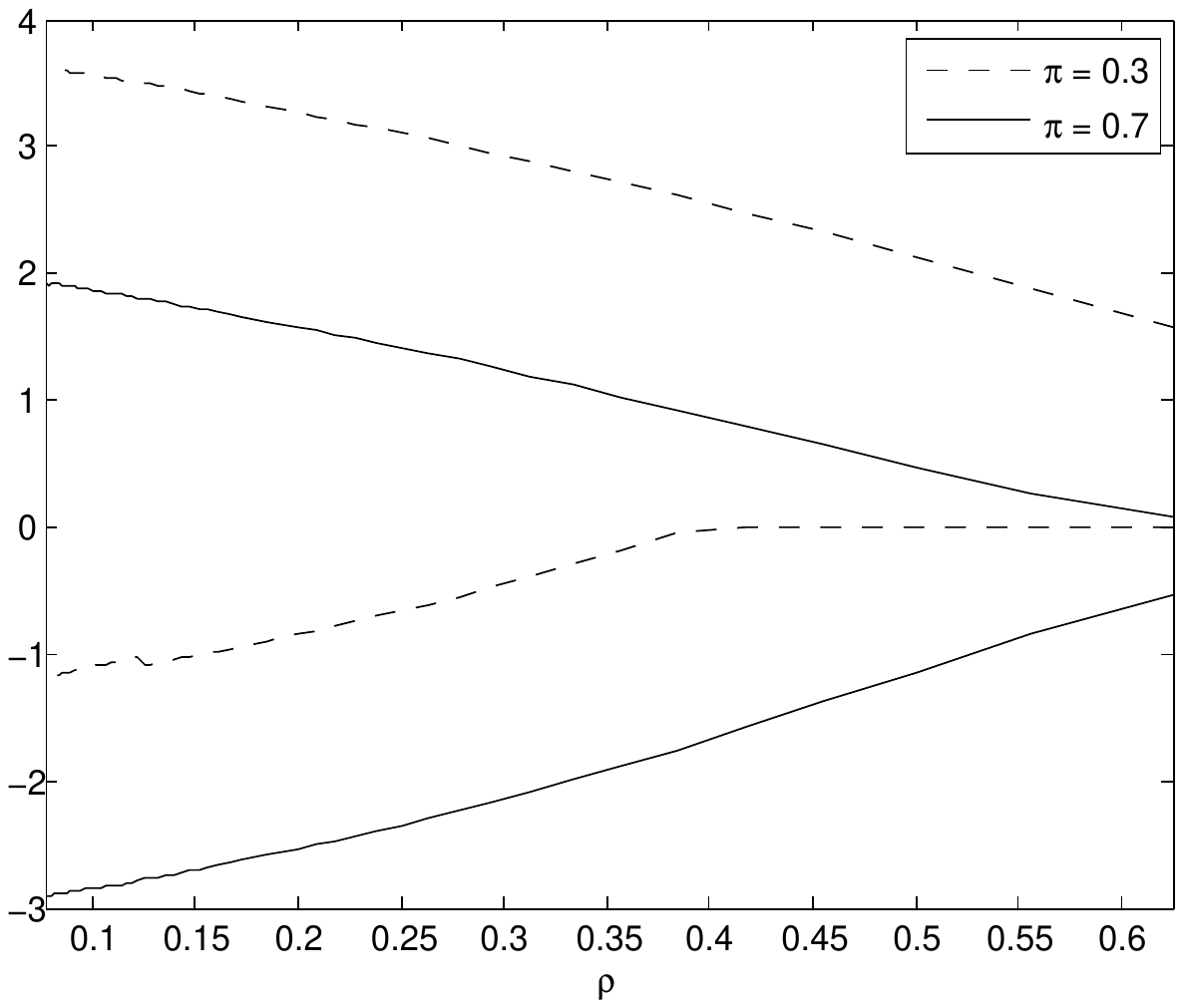}
                \caption{Decision boundaries $a$ and $b$ for $\Lambda_k$}
                \label{fig:bayes_ab}
        \end{subfigure}%
        ~ 
        \begin{subfigure}[b]{0.4\textwidth}
                \centering
                \includegraphics[trim = 40mm 85mm 40mm 80mm, clip, width=7cm]{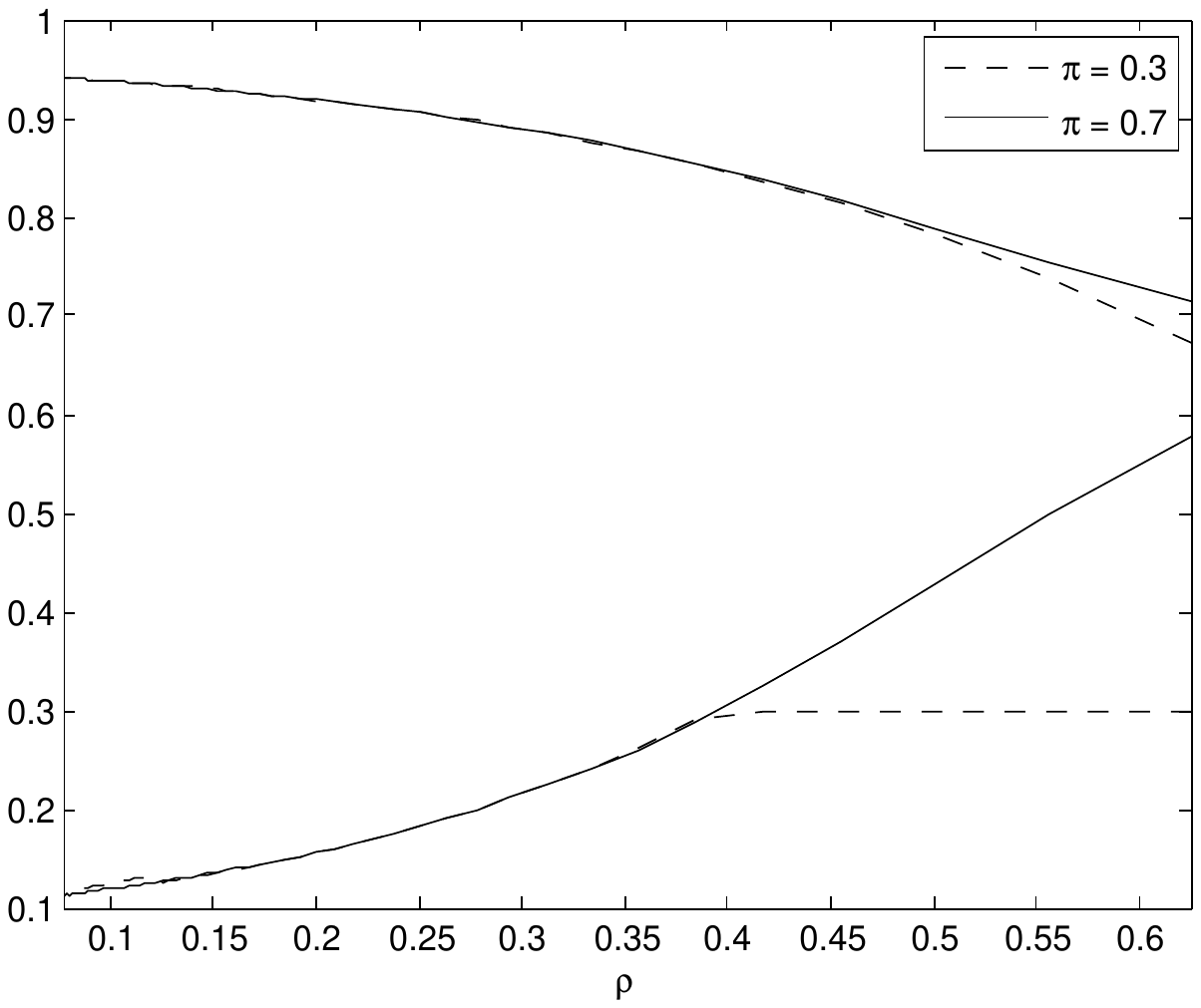}
                \caption{Boundaries $a^*$ and $b^*$ for the posterior}
                \label{fig:bayes_abstar}
        \end{subfigure}
        ~ 

        \caption{Decision boundaries for the Bayesian formulation}
\end{figure}

\section{Acknowledgments}
We would like to thank Onno Boxma, Dominik Kortschak, Andreas L\"{o}pker and Jef Teugels for their valuable comments. This work was supported by the Swiss National Science Foundation Project 200020-143889 and the EU-FP7 project Impact2C.


\bibliographystyle{abbrv}
\bibliography{./sprt}
\end{document}